\newcommand{\rref}[2]{\hyperref[#2]{{#1}~\ref*{#2}}}
\theoremstyle{plain}
\newtheorem{theorem}{Theorem}[section]
\newtheorem{proposition}[theorem]{Proposition}
\newtheorem{lemma}[theorem]{Lemma}
\theoremstyle{definition}
\newtheorem{conjecture}[theorem]{Conjecture}
\newtheorem{problem}{Problem}
\newcommand\tran{\mkern-2mu\raise1.25ex\hbox{$\scriptscriptstyle\top$}\mkern-3.5mu}
\def\tr{\mathop{\rm tr }\nolimits}
\newcommand{\R}{\mathds{R}}
\newcommand{\F}{\mathds{F}}
\def\matrix0{{\mbox {\boldmath $O$}}}
\def\j{{\mbox{\boldmath $1$}}}
\def\vec0{\mbox{\bf 0}}
\def\vecrho{{\mbox{\boldmath $\rho$}}}
\def\vecnu{{\mbox{\boldmath $\nu$}}}
\title{On the sum of the largest and smallest eigenvalues\\ of odd-cycle free graphs}
\author{Aida Abiad\thanks{\texttt{a.abiad.monge@tue.nl},  Department of Mathematics and Computer Science, Eindhoven University of Technology, The Netherlands}\thanks{Department of Mathematics and Data Science of Vrije Universiteit Brussel, Belgium}\qquad Vladislav Taranchuk\thanks{\texttt{Vlad.Taranchuk@UGent.be}, Department of Mathematics: Analysis, Logic and Discrete Mathematics, Ghent University, Belgium}  \qquad Thijs van Veluw\thanks{\texttt{Thijs.vanVeluw@ugent.be}, Department of Mathematics, Computer Science and Statistics, Ghent University, Belgium}\thanks{Department of Mathematics and Computer Science, Eindhoven University of Technology, The Netherlands}}
\date{}
\begin{document}

\maketitle

\begin{abstract}
    Let $G$ be a graph with adjacency eigenvalues $\lambda_1 \geq \cdots \geq \lambda_n$. Both $\lambda_1 + \lambda_n$ and the odd girth of $G$ can be seen as measures of the bipartiteness of $G$. Csikvári proved in 2022 that for odd girth 5 graphs (triangle-free) it holds that $(\lambda_1+\lambda_n)/n \le (3-2\sqrt 2) < 0.1716$. In this paper we extend Csikvári's result to general odd girth $k$ proving that $(\lambda_1+\lambda_n)/n = O(k^{-1})$. In the case of odd girth 7, we prove a stronger upper bound of $(\lambda_1+\lambda_n)/n < 0.0396$.\\

\noindent\textbf{Keywords:} graph, eigenvalues,  odd girth, extremal problem, bipartiteness
\end{abstract}

\section{Introduction}

Let $\lambda_1\geq \cdots \geq \lambda_n$ be the adjacency eigenvalues of a simple graph $G$. 
Denote by $Q = A + D$ the \emph{signless Laplacian matrix} of $G$, where $D$ is the
diagonal matrix of the degrees of $G$ and $A$ is the adjacency matrix of $G$. Let $q_n\leq \cdots \leq q_1$ denote the eigenvalues of $Q$. For $d$-regular graphs, $\lambda_1=d$ and $D=d\cdot I$, so $q_n =\lambda_1+\lambda_n$, but for irregular graphs, this is not necessarily true.

Both $q_n$ and $\lambda_1 + \lambda_n$ have been studied as measures for the bipartiteness of a graph, see e.g. \cite{BCLNV2023,B1998,C2022,LNO2016}. Indeed, it is known that a graph $G$ is bipartite if and only if $\lambda_1 + \lambda_n= 0$, and that $G$ has a bipartite connected component if and only if $q_n=0$. Let $D_2(G)$ be the minimum number of edges that have to be removed in order to make $G$ bipartite. Brandt \cite{B1998} and de Lima, Nikiforov and Oliveira \cite{LNO2016} showed that the following inequality holds for any graph
\begin{equation}\label{eq:qn Dn}
    D_2(G)\ge \frac {q_n(G)}{4} n.
\end{equation}
 The interest on the parameter $D_2(G)$ was spurred by an old conjecture of Erd\H{o}s \cite{E1976} which states the following:

\begin{conjecture}[Erd\H{o}s 1976]\label{conj:D2}
    If $G$ is a triangle-free graph on $n$ vertices, then $D_2(G)\le n^2/25$.
\end{conjecture}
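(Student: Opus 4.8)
\textbf{Proof proposal for Conjecture~\ref{conj:D2}.} This is a notorious problem of Erd\H{o}s that has been open for almost fifty years, so what follows is the plan I would pursue toward it, or toward an approximate version, rather than a claimed proof. The natural starting point is the conjectured extremal graph: the balanced blow-up $C_5[n/5]$ of the $5$-cycle, obtained by replacing each vertex by an independent set of size $n/5$ and each edge by a complete bipartite graph. It is triangle-free, has $n^2/5$ edges, and its maximum cut has $4n^2/25$ edges, so exactly $n^2/25$ edges must be deleted to make it bipartite. Writing $\operatorname{maxcut}(G)$ for the size of a maximum cut, one has $D_2(G)=|E(G)|-\operatorname{maxcut}(G)$, so the conjecture is equivalent to $\operatorname{maxcut}(G)\ge |E(G)|-n^2/25$ for every triangle-free $G$, and any valid argument must be \emph{exactly} tight at $C_5[n/5]$ --- this alone rules out crude counting.

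First I would split on density. If $|E(G)|\le n^2/25$ then deleting all edges already suffices, so assume $|E(G)|\ge(1/25+\delta)n^2$. In the dense regime one can invoke the structural philosophy behind the Andr\'asfai--Erd\H{o}s--S\'os and K\H{o}v\'ari--S\'os--Tur\'an theorems: in a triangle-free graph every neighbourhood is independent, and double counting common neighbourhoods then forces the large ones to sit in either a bipartite-like or a $C_5$-blow-up-like pattern. The aim of this step is a stability statement --- every triangle-free $G$ with $|E(G)|\ge(1/25+\delta)n^2$ for which $D_2(G)$ is close to $n^2/25$ is $o(n^2)$-close in edit distance to $C_5[n/5]$ (or to a complete bipartite graph, for which $D_2=0$). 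Given such stability, the conjecture would follow from a local second-variation computation: perturbations of $C_5[n/5]$ obtained by reassigning $o(n)$ vertices among the five parts can only decrease $|E(G)|-\operatorname{maxcut}(G)$, which one checks directly part by part.

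The cleanest concrete route, and the one I would actually implement, is the flag-algebra/semidefinite method: express $\sup_G D_2(G)/n^2$ over triangle-free $G$ as a density-limit optimisation (with the $2$-coloring that realises the cut carried as auxiliary structure), assemble Cauchy--Schwarz certificates from flags on five, six, and seven vertices, and solve the resulting SDP; if its optimum equals $1/25$, round the dual certificate to a human-readable proof and close it with a uniqueness analysis of the optimal limit. The main obstacle --- and, I suspect, a chief reason the conjecture has resisted attack --- is that the extremal configuration is \emph{not} robust: once a little slack is permitted there is roughly a one-parameter family of near-extremal triangle-free graphs interpolating between $C_5[n/5]$ and bipartite graphs, so the SDP tends to be only approximately tight and the rounding step cannot close the last gap; realistically I would expect to obtain only $D_2(G)\le(1/25+\varepsilon)n^2$ for an explicit small $\varepsilon$. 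Finally, the spectral inequality \eqref{eq:qn Dn} is of no direct help here: bounding $q_n(G)$ from above --- the business of the rest of this paper --- yields only a \emph{lower} bound on $D_2(G)$, so a genuinely different mechanism is required for Conjecture~\ref{conj:D2}.
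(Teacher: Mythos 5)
The statement you were asked about is not a theorem of the paper and has no proof there: it is Erd\H{o}s's 1976 conjecture, quoted purely as motivation, and it remains open --- the paper itself records that the best known bound is $D_2(G)\le n^2/23.5$ (Balogh--Clemen--Lidick\'y). You correctly recognize this, and your text is a research plan rather than a claimed proof, so there is no argument to check for correctness against the paper and, to be clear, nothing in your proposal establishes the statement either.

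As a plan, what you write is factually sound and well aligned with the literature. The extremal example $C_5[n/5]$, the identity $D_2(G)=|E(G)|-\operatorname{maxcut}(G)$, and the resulting value $n^2/5-4n^2/25=n^2/25$ are all correct; the flag-algebra/SDP route you sketch is exactly the method behind the current record $n^2/23.5$, and your diagnosis of why it stalls (a family of near-extremal configurations interpolating between the $C_5$-blow-up and bipartite graphs, so the certificate is not tight) matches the accepted understanding. Your closing remark is also right and worth emphasizing in the context of this paper: inequality \eqref{eq:qn Dn} runs in the wrong direction for the conjecture --- upper bounds on $q_n/n$ or on $(\lambda_1+\lambda_n)/n$, which are what this paper proves, are consequences one would \emph{expect} if the conjecture holds (via $q_n/n\le 4/25$), not tools for proving it. The one caveat I would add is that the ``dense regime'' stability step you describe is itself not available in the strong form you would need; known stability results for triangle-free graphs with large minimum degree (Andr\'asfai--Erd\H{o}s--S\'os type) do not cover the sparse-but-not-too-sparse range where the conjecture is hard, which is another reason the problem has resisted attack.
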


The best-known upper bound is currently $D_2(G) \leq n^2/23.5$ which was shown by Balogh, Clemen and
Lidick\'{y} \cite{BCL2021}. Using (\ref{eq:qn Dn}), from Conjecture \ref{conj:D2}, it would follow that $q_n/n\le 4/25 =0.16 $ for triangle-free graphs. This was shown to be true by Balogh, Clemen,
Lidick\'{y}, Norin and Volec \cite{BCLNV2023}:
\begin{theorem}[{\cite[Theorem 1.6]{BCLNV2023}}]
    If $G$ is a triangle-free graph on $n$ vertices, then $q_n(G)/n \le 15/94<0.1596$.
\end{theorem}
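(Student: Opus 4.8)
The plan is to first reformulate the quantity. Write $Q=A+D$, so that $q_n(G)=\min_{x\ne 0}\frac{x^{\top}Qx}{x^{\top}x}$ with $x^{\top}Qx=\sum_{uv\in E(G)}(x_u+x_v)^2$. Testing $x=e_v$ already gives $q_n(G)\le d(v)$, hence $q_n(G)\le\delta(G)$; so I may assume $\delta(G)\ge\frac{15}{94}n$ (otherwise the bound is immediate), and in particular $m(G)=\Theta(n^2)$. The plan is then to exhibit, for each such dense triangle-free $G$, a nonzero test vector whose Rayleigh quotient is at most $\frac{15}{94}n$, chosen to depend only on the \emph{local} structure of $G$ around a random root so that averaging converts its quotient into a ratio of subgraph densities; the theorem then becomes a single density inequality over triangle-free graphs, which is the natural target for the flag algebra / semidefinite method of Razborov.

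Concretely, I would fix a short tuple of real parameters $\phi$ and, for a uniformly random vertex $w$, set $x^{(w)}\in\R^{V(G)}$ with entry $x^{(w)}_u$ depending only on the ``type'' of $u$ relative to $w$ --- at the crudest level whether $u=w$, $u\sim w$, $\operatorname{dist}(u,w)=2$, or $\operatorname{dist}(u,w)\ge 3$ (a finer rooted profile, or entries restricted to $\{-1,0,1\}$, are the obvious refinements). With $a_w=(x^{(w)})^{\top}Qx^{(w)}$ and $b_w=(x^{(w)})^{\top}x^{(w)}$, and $\phi$ chosen nonzero on the type $u=w$ so that $b_w>0$, the elementary inequality $\min_w a_w/b_w\le(\sum_w a_w)/(\sum_w b_w)$ gives
\[
\frac{q_n(G)}{n}\ \le\ \frac1n\cdot\frac{\sum_w\sum_{uv\in E(G)}\bigl(x^{(w)}_u+x^{(w)}_v\bigr)^2}{\sum_w\sum_u\bigl(x^{(w)}_u\bigr)^2}\ =\ \frac{N_\phi(G)}{D_\phi(G)}+O\!\left(\tfrac1n\right),
\]
where $N_\phi,D_\phi$ are fixed polynomials in the subgraph densities of $G$, and $D_\phi(G)$ is bounded below since $G$ is dense. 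The $O(1/n)$ error is absorbed by the slack between the density bound proved below and $\frac{15}{94}$, with the finitely many small graphs checked directly.

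It then remains to prove $N_\phi\le\frac{15}{94}D_\phi$ for every triangle-free graphon and to pick $\phi$ well. This is the flag algebra step: I would search for positive semidefinite coefficient matrices (one per type) so that $\frac{15}{94}D_\phi-N_\phi$, reduced modulo the identity $t(K_3,\cdot)=0$, equals a sum of squares in the flag algebra determined by these matrices, making nonnegativity manifest; the matrices come from a semidefinite program at a suitable flag order, are then rounded to exact rationals and verified, and the low-dimensional outer search over $\phi$ is done by alternating with the SDP (or by reading $\phi$ off a conjectured extremal configuration).

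The main obstacle is this last step. The value $\frac{15}{94}$ is surely not optimal, so forcing the method down to it requires (a) enough understanding of the near-extremal configuration to choose the local profile $\phi$ and the flag order that make the SDP tight enough, and (b) a numerically stable solve followed by an \emph{exact} rational rounding of the certificate --- the computational heart of the whole argument. A minor additional point is to keep the averaging valid when $x^{(w)}$ degenerates, which is handled by choosing $\phi$ nonzero on the root type (or by restricting the average to roots with $b_w>0$).
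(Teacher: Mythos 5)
First, note that the paper you are commenting on does not prove this statement at all: it is quoted verbatim from Balogh, Clemen, Lidick\'y, Norin and Volec \cite{BCLNV2023}, and your plan is in fact the route taken there --- a rooted test vector for the Rayleigh quotient of $Q$, averaging numerator and denominator over the root to turn the problem into an inequality between subgraph densities of triangle-free graphs, and a computer-generated, exactly rounded flag-algebra (SDP) certificate for that inequality. The elementary ingredients you state are sound: $x^{\tran}Qx=\sum_{uv\in E}(x_u+x_v)^2$, the bound $q_n\le\delta(G)$ from $x=e_v$, and the mediant inequality $\min_w a_w/b_w\le(\sum_w a_w)/(\sum_w b_w)$ for $b_w>0$.

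However, as a proof of the stated theorem there is a genuine gap, and it is exactly where you place the ``main obstacle'': the constant $15/94$ \emph{is} the output of the semidefinite/flag-algebra step, so until you exhibit a concrete local profile $\phi$ and an explicit positive semidefinite certificate (rounded to exact rationals and verified) showing $N_\phi\le\frac{15}{94}D_\phi$ over triangle-free graphons, nothing in the argument yields the claimed bound --- the reductions preceding it are routine and carry none of the quantitative content. There is no a priori guarantee that a profile of the crude form you describe, at a feasible flag order, certifies a value as small as $15/94$; asserting that the search ``can be made tight enough'' is precisely the theorem. A second, technical point: entries depending on graph distance (your classes $\operatorname{dist}(u,w)=2$ versus $\operatorname{dist}(u,w)\ge 3$) are not expressible as flag densities, since ``having no common neighbour with $w$'' is not determined by bounded induced subgraphs containing the root; the profile must be phrased in terms of adjacency/non-adjacency to one or more fixed roots (as in \cite{BCLNV2023}), or the averaged numerator will not be the polynomial in densities that your flag-algebra step requires. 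So: right strategy, matching the cited source, but the computational certificate that constitutes the proof is missing.
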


However, as is mentioned in \cite{BCLNV2023}, this bound is not sharp, and it can be improved to $0.1547$, and even to $0.1545$ when restricting to regular graphs. The triangle-free graph giving the highest known value of $q_n/n$ is the Higman-Sims graph (the unique strongly regular graph with parameters $(100,22,0,6)$), giving $q_n/n=7/50=0.14$. So in fact, it remains an open question to determine a sharp upper bound for $q_n/n$ over all triangle-free graphs. More generally, there has been recent work which studied the relationship between the largest and smallest eigenvalues of a graph and its clique number \cite{B1998, LNO2016, N2006}. In fact, determining an upper bound for $\lambda_1 + \lambda_n$ in a $K_{r+1}$-free graph has been stated as unsolved problem in \cite{LN2023}.

Recall that for regular graphs, $q_n = \lambda_1 + \lambda_n$. Brandt \cite{B1998} was the first to give an upper bound on $q_n/n$ for triangle-free regular graphs, demonstrating that in this case $q_n/n \leq 3 - 2\sqrt{2} < 0.1716$. Recently, Csikv\'{a}ri \cite{C2022} extended this upper bound of the quantity $(\lambda_1 + \lambda_n)/n$ to all triangle-free graphs. Thus, when $G$ is triangle-free, we have the following inequality.
\begin{equation}\label{eq:3-2sqrt2}
    \frac{\lambda_1+\lambda_n}{n}\le 3-2\sqrt 2.
\end{equation}

While in \cite{BCLNV2023} the stronger bound of 0.1597 has been shown to hold for $q_n/n$, no such improvements have been given for the quantity $(\lambda_1 + \lambda_n)/n$.

It is well-known that a graph $G$ is bipartite if and only if it contains no odd cycles. Hence a natural measure of the bipartiteness of $G$ is given by its \textit{odd girth}, which is the length of the shortest odd cycle in $G$. When $G$ is bipartite, the odd girth is said to be $\infty$. In this paper we investigate precisely the relationship between the following two measures of bipartiteness: $\lambda_1+\lambda_n$ and the odd girth. Our results build on the work of \cite{B1998, C2022} and can be seen as a natural extension of it since the triangle-free case is equivalent to the odd girth 5 case. Define 
$$
\gamma_{k} = \sup\left\{ \frac{\lambda_1 + \lambda_n}{n}: G \text{ has odd girth at least }k \right\}.
$$

The first case of odd girth 3 is attained by any infinite sequence of complete graphs, which implies $\gamma_3 = 1$. The next case of odd girth 5
can equivalently be restated as the triangle-free case. Therefore, using (\ref{eq:3-2sqrt2}) for the upper bound, together with the Higman-Sims graph for the lower bound, it follows that
$$
0.14 \leq \gamma_5 < 0.1716.
$$
In the case that $G$ is regular, $q_n = \lambda_1 + \lambda_n$ and so in fact the stronger bound of $0.1545$ holds \cite{BCLNV2023}. Our first result is a general bound on $\gamma_{k}$.
\begin{theorem}\label{thm:general}
    Let $k$ be an odd positive integer, then there exists constants $c_1$ and $c_2$ such that 
    $$
    c_1k^{-3} \leq \gamma_{k} < c_2k^{-1}
    $$
\end{theorem}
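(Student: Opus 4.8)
\emph{The lower bound.} For odd $k\ge 3$ take $G=C_k$, which has odd girth exactly $k$. Its adjacency eigenvalues are $2\cos(2\pi j/k)$ for $0\le j\le k-1$, so $\lambda_1=2$ and $\lambda_n=-2\cos(\pi/k)$, whence
\[
\lambda_1+\lambda_n=2-2\cos(\pi/k)=4\sin^2\!\Big(\tfrac{\pi}{2k}\Big)\ge \tfrac{4}{k^2},
\]
using $\sin t\ge \tfrac{2}{\pi}t$ on $[0,\tfrac{\pi}{2}]$. Since $n=k$, this gives $\gamma_k\ge 4k^{-3}$; the case $k=1$ is immediate since $\gamma_1=1$. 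Hence $c_1=1$ works for every odd $k$.

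\emph{The upper bound: reductions and the signing.} It suffices to prove $\lambda_1+\lambda_n\le \tfrac{c_2}{k}n$. One may assume $G$ connected: if $C_a,C_b$ are the components realizing $\lambda_1(G),\lambda_n(G)$, then $\lambda_1(G)+\lambda_n(G)=\lambda_1(C_a)+\lambda_n(C_b)\le \lambda_1(C_a)+\lambda_n(C_a)$ because $\lambda_n(C_b)\le\lambda_n(C_a)$, and $|V(C_a)|\le n$, so the connected case implies the general one. Write $m=\tfrac{k-1}{2}$ and let $\mathbf x$ be the Perron eigenvector of $A$ with $\|\mathbf x\|_2=1$; since $G$ is connected, $\mathbf x>0$. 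The crucial local observation is: if $uv\in E$ and $d(r,u)=d(r,v)=:\rho$ for some vertex $r$, then a shortest $r$–$u$ path, followed by $uv$, followed by a shortest $v$–$r$ path, is a closed walk of odd length $2\rho+1$, hence contains an odd cycle of length $\le 2\rho+1$; by hypothesis $2\rho+1\ge k$, i.e.\ $\rho\ge m$. In particular no edge joins two vertices at equal distance $<m$ from a common root. Now fix $r$, set $\sigma_r(v)=(-1)^{d(r,v)}$ and $\mathbf y_r=\sigma_r\circ\mathbf x$. Then $\|\mathbf y_r\|_2=1$, and since for $uv\in E$ we have $\sigma_r(u)\sigma_r(v)=+1$ exactly when $d(r,u)=d(r,v)$ (and $-1$ otherwise), using $A\mathbf x=\lambda_1\mathbf x$ and $\sum_{uv\in E}x_ux_v=\tfrac12\mathbf x^{\top}A\mathbf x=\tfrac12\lambda_1$ one computes
\[
\mathbf y_r^{\top}A\,\mathbf y_r=2\!\!\sum_{uv\in E}\!\!\sigma_r(u)\sigma_r(v)\,x_ux_v=4T^r-\lambda_1,\qquad T^r:=\!\!\sum_{\substack{uv\in E\\ d(r,u)=d(r,v)}}\!\! x_ux_v .
\]
The Rayleigh bound $\lambda_n\le\mathbf y_r^{\top}A\,\mathbf y_r$ then gives $\lambda_1+\lambda_n\le 4T^r$ for every root $r$, and by the local observation $T^r$ only involves edges both of whose endpoints lie at distance $\ge m$ from $r$.

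\emph{The upper bound: the key estimate.} Averaging over roots with the Perron weights $x_r^2$, which sum to $1$,
\[
\min_r T^r\;\le\;\sum_r x_r^2\,T^r\;=\;\sum_{uv\in E}x_ux_v\,B_{uv},\qquad B_{uv}:=\!\!\sum_{r:\,d(r,u)=d(r,v)}\!\! x_r^2 ,
\]
where every contributing root lies at distance $\ge m$ from both $u$ and $v$. The plan is finished once one proves
\[
\sum_{uv\in E}x_ux_v\,B_{uv}\;\le\;\frac{c}{k}\sum_{uv\in E}x_ux_v\;=\;\frac{c}{2k}\,\lambda_1
\]
for an absolute constant $c$: then some root $r$ has $T^r\le\tfrac{c}{2k}\lambda_1$, so $\lambda_1+\lambda_n\le 4T^r\le\tfrac{2c}{k}\lambda_1<\tfrac{2c}{k}n$ (as $\lambda_1<n$), and $c_2=2c$ works.

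\emph{Where the difficulty lies.} This last inequality is the main obstacle, and it cannot be obtained by bounding $B_{uv}$ edge by edge: if the Perron mass concentrates on a bipartite induced subgraph far (distance $\gg k$) from every short odd cycle, then $B_{uv}$ is close to $1$ for the few low-weight edges on a shortest odd cycle, whereas for an edge $uv$ sitting inside a bipartite region $B_{uv}=0$ for parity reasons even though the balls of radius $m$ around $u,v$ may carry little Perron mass. One must therefore argue globally, grouping edges by how the Perron mass distributes near them, and combining (i) that on a large "almost bipartite core" the level function $\sigma_r$ renders $B_{uv}$ negligible, with (ii) the eigenvector identity $\lambda_1x_v=\sum_{w\sim v}x_w$, iterated $m$ times, together with the vanishing of the odd walk counts $\tr(A^{2j+1})=0$ for $2j+1<k$, to bound the Perron mass escaping to distance $\ge m$ from the endpoints of a heavy edge. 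I expect this quantitative "odd girth $\ge k$ forces Perron-weighted almost-bipartiteness within $O(1/k)$" statement to be the essential input beyond Csikvári's triangle-free argument (the case $k=5$, where it degenerates to the statement that neighbourhoods are independent), and its slack — not the signing step — is what leaves $c_2$ far from the $k^{-3}$ lower bound.
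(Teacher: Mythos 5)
Your lower bound is correct and is essentially the paper's argument: the odd cycle $C_k$ gives $\gamma_k\ge (2-2\cos(\pi/k))/k=\Theta(k^{-3})$.

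The upper bound, however, has a genuine gap, and you have identified it yourself: the entire proof rests on the estimate $\sum_{uv\in E}x_ux_v B_{uv}\le \frac{c}{k}\sum_{uv\in E}x_ux_v$ with an absolute constant $c$, and you do not prove it. Worse, as stated it is false. Take $G$ to be the folded $(2m+1)$-cube, which has odd girth $k=2m+1$ and is vertex- and edge-transitive, so $x_r^2=1/n$ for all $r$. For an edge $uv$ the vertices equidistant from $u$ and $v$ are exactly those lifting to weight-$m$ vectors with a prescribed coordinate equal to $0$, so there are $\binom{2m}{m}$ of them out of $n=2^{2m}$, giving $B_{uv}=\binom{2m}{m}4^{-m}=\Theta(k^{-1/2})$ for every edge. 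Hence the Perron-weighted average of $B_{uv}$ is $\Theta(k^{-1/2})$, not $O(k^{-1})$, and since this family is vertex-transitive the minimum over roots equals the average, so your route through $\min_r T^r$ cannot do better than $\lambda_1+\lambda_n\le O(k^{-1/2})\lambda_1$ on such graphs. That does not contradict the theorem (there $\lambda_1=k\ll n$), but it means your plan, even if its key estimate were repaired to the best possible exponent, would need a further argument trading $\lambda_1$ against $n$ to reach the claimed $c_2k^{-1}\cdot n$; none is given. The signing/Rayleigh step itself ($\lambda_1+\lambda_n\le 4T^r$) and the observation that equidistant edge-endpoints force $d(r,u)\ge (k-1)/2$ are fine, but they are the easy part.

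For comparison, the paper's proof avoids all of this with a short global argument: odd girth at least $2\ell+3$ means $\tr(A^{2\ell+1})=0$, and splitting off $\lambda_1$ while bounding the remaining odd powers by $\lambda_n^{2\ell-1}\sum_{i\ge2}\lambda_i^2=\lambda_n^{2\ell-1}(2|E|-\lambda_1^2)$, together with $2|E|\le \lambda_1 n$, yields the generalized ratio-type bound $\lambda_1\le -\lambda_n^{2\ell-1}n/(\lambda_1^{2\ell-1}-\lambda_n^{2\ell-1})$ (Lemma \ref{lem:ratio bound odd girth}). A one-variable optimization (Proposition \ref{prop:l1+ln bound odd girth through ratio}) and an elementary estimate of the optimizer via the Lambert $W$ function then give the explicit bound $\gamma_k<W(1/e)/(k-4)=O(k^{-1})$. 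If you want to salvage your approach, you would need both a correct quantitative substitute for your ``almost-bipartiteness'' estimate and a mechanism for converting a bound in terms of $\lambda_1$ into one in terms of $n$; the trace argument above accomplishes both at once.
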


Though Theorem \ref{thm:general} is stated in terms of asymptotics, we do obtain explicit expressions for the upper and lower bounds in Section 3.1 (see Proposition \ref{prop:l1+ln bound odd girth through ratio}). The lower bound is attained by the cycle of odd length $k$. The upper bound is obtained by extending ideas in \cite{C2022}.

Our second result gives much stronger bounds for $\gamma_7$. In fact, our upper bound for $\gamma_7$ also improves the general upper bound for $\gamma_9$, and to do so we used the technique of weight interlacing. Furthermore, we found that the folded 7-cube outperforms the 7-cycle for a lower bound for $\gamma_7$. Our search via the house of graphs database \cite{houseofgraphs} revealed no graphs in this database that outperform the folded 7-cube. In particular, we prove the following:

\begin{theorem}\label{thm:7}
It holds that    $$
    0.03125 \leq \gamma_7 < 0.0396.
    $$
\end{theorem}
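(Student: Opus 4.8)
The plan is to establish the two bounds separately. For the lower bound $\gamma_7 \geq 0.03125 = 1/32$, I would exhibit an explicit odd-girth-$7$ graph achieving this ratio, namely the folded $7$-cube. This is a distance-regular graph on $64$ vertices that is $7$-regular; since it is regular, $q_n = \lambda_1 + \lambda_n$, so it suffices to compute its spectrum. The folded $7$-cube is a quotient of the $7$-cube $Q_7$ by the antipodal map, and its eigenvalues are known to be $7 - 2i$ for the even values of $i$ in $\{0,1,\dots,7\}$ (i.e.\ $i \in \{0,2,4,6\}$), with appropriate multiplicities. This gives $\lambda_1 = 7$ and $\lambda_n = 7 - 12 = -5$, hence $(\lambda_1+\lambda_n)/n = 2/64 = 1/32$. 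I would also need to verify that its odd girth is exactly $7$ (it contains no triangle or $5$-cycle but does contain a $7$-cycle), which follows from the structure of the folding. The search over the house of graphs database is reported as supporting evidence that this is likely optimal, but only the lower bound itself needs proof.

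For the upper bound $\gamma_7 < 0.0396$, I would adapt the eigenvalue-averaging / closed-walk counting machinery that underlies Csikv\'ari's proof of \eqref{eq:3-2sqrt2}, as announced in the excerpt. The idea is to bound $\lambda_1 + \lambda_n$ from above by analyzing a suitable nonnegative combination of walk-count quantities. Concretely, if $\v$ is a unit eigenvector for $\lambda_1$ and $\w$ a unit eigenvector for $\lambda_n$, one builds a test vector (e.g.\ $\v + \w$ restricted to its positive part, or a product construction $v_i w_i$) and applies the Rayleigh quotient together with the fact that in an odd-girth-$\geq 7$ graph there are no closed walks of length $3$ or $5$ through any structure one is counting; the absence of short odd closed walks forces cancellations in the relevant sums. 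I would organize this via a sequence of inequalities relating $\sum_i (\lambda_1 + \lambda_n)$-type averages to $\tr(A^j)$ for small $j$, using that $\tr(A) = \tr(A^3) = \tr(A^5) = 0$, and then optimize the resulting polynomial/rational expression to extract the numerical constant $0.0396$.

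The additional twist advertised in the excerpt is that the upper bound for $\gamma_7$ is proved strongly enough to also beat the general bound for $\gamma_9$, and that this improvement uses \emph{weight interlacing}. So I would incorporate a weighted-quotient-matrix argument: choose a clever vertex partition (or a weighting of the vertices reflecting the eigenvector $\v$ or local neighborhood structure), form the corresponding weighted quotient matrix $B$, and use interlacing between the eigenvalues of $B$ and those of $A$ (or of $A$ with a diagonal shift) to sharpen the bound on $\lambda_1 + \lambda_n$ beyond what the raw walk counts give. The main obstacle I anticipate is precisely this last step: designing the right weighting/partition so that weight interlacing yields a bound below the threshold $0.0396$, and then carrying out the ensuing optimization cleanly. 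The walk-counting part is essentially a careful generalization of \cite{C2022}, but getting the constant small enough — rather than merely $O(1/k)$ as in Theorem \ref{thm:general} specialized to $k=7$ — is where the weighted interlacing must do real work, and verifying that the chosen weights actually satisfy the hypotheses of the interlacing theorem (nonnegativity, correct row sums) in an odd-girth-$7$ graph is the delicate point.
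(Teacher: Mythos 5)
Your lower bound is correct and is essentially the paper's argument: the folded $7$-cube ($64$ vertices, $7$-regular, spectrum $\{7,3,-1,-5\}$, odd girth $7$) gives $(\lambda_1+\lambda_n)/n = 2/64 = 1/32$, exactly as in the paper.

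The upper bound, however, is only a plan, and the plan as sketched has a genuine gap. The walk-counting part you describe (test vectors, Rayleigh quotients, and the vanishing of $\tr(A^3)$ and $\tr(A^5)$) is precisely the mechanism behind the paper's \emph{general} bound (Lemma \ref{lem:ratio bound odd girth} and Proposition \ref{prop:l1+ln bound odd girth through ratio}); specialized to odd girth $7$ it yields a constant of roughly $0.076$, well above $0.0396$, so no amount of careful optimization of those trace inequalities alone can deliver the stated constant. You correctly identify that weight interlacing must carry the load, but you leave unspecified exactly the ingredients that constitute the proof: (i) the weights are the entries of the Perron eigenvector $\vecnu$ (so nonnegativity and the row-sum property you worry about are automatic, via the normalized weight-characteristic matrix of Section \ref{sec:preliminaries}); (ii) the partition is the distance partition around a single well-chosen vertex $u$, namely $V_1=N(u)$, $V_2=\{u\}\cup\{$distance-$2$ vertices$\}$, $V_3=\{$distance $\ge 3\}$, where the hypothesis of odd girth at least $7$ is used to force zero entries in the $3\times 3$ weight-quotient matrix (no edges inside $V_1$, no edges inside $V_2$, no edges between $V_1$ and $V_3$), so that the matrix is determined by $\lambda_1$ and the two weighted class sizes $\delta=\|\vecrho V_1\|^2$, $\alpha=\|\vecrho V_2\|^2$; (iii) the vertex $u$ must be chosen so that $\|\vecrho N(u)\|^2\ge \lambda_1/n$ (the paper's Lemma \ref{lem:odd girth 7}, an averaging plus Cauchy--Schwarz argument that replaces the regular-graph fact $|N(u)|=\lambda_1$), which is what converts the interlacing bound on $\lambda_n$ into a bound on $(\lambda_1+\lambda_n)/n$; and (iv) the weighted independent-set comparison (Lemma \ref{lem:n/2}) giving the feasibility constraints $\delta<\alpha<1/2$, after which a two-variable optimization of the explicit expression for $\lambda_3(M)$ produces the constant $0.0396$. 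Without the specific partition, the choice of $u$, and these constraints, the ``delicate point'' you flag is exactly the proof itself, so the proposal does not establish the upper bound.
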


Table \ref{tab:bounds survey}
 gives the current summary of the bounds on $\gamma_k$ which include the contributions from this paper.

\begin{table}[ht!]
    \begin{center}
        \begin{tabular}{|c|c|c|}
            \hline
            Odd girth $k$ & Upper bound for $\gamma_{k}$ & Lower bound for $\gamma_{k}$\\
            \hline
            \hline
            3 & $1$ & $1$ ($K_n, n \rightarrow \infty$)\\
            \hline
            5 & $0.1716$  \cite{C2022} & $0.14$ (Higman-Sims graph \cite{B1998})\\
            \hline
            7 & $0.0396$ (Theorem \ref{thm:7}) & $0.0312$ (folded 7-cube)\\
            \hline
            9 & $0.0396$ (Theorem \ref{thm:7}) & $0.0134$ (9-cycle)\\
            \hline
            11 & $0.0365$ (Proposition \ref{prop:l1+ln bound odd girth through ratio}) & $0.0073$ (11-cycle)\\
            \hline
            13 & $0.0289$ (Proposition \ref{prop:l1+ln bound odd girth through ratio}) & $0.0044$ (13-cycle)\\
            \hline
            15 & $0.0240$ (Proposition \ref{prop:l1+ln bound odd girth through ratio}) & $0.0029$ (15-cycle)\\
            \hline
            \vdots & \vdots & \vdots\\
            \hline
        \end{tabular}
    \end{center}
    \caption{Upper and lower bounds for $\gamma_{k}$ for graphs with a given small odd girth. }
    \label{tab:bounds survey}
\end{table}

The rest of the paper is structured as follows. In Section \ref{sec:preliminaries} we establish some preliminaries and give the reader a background in weight interlacing. In Section \ref{sec:new results}, we prove all our main results. We end in Section \ref{sec:concludingremarks} with some concluding remarks and open problems.

\section{Preliminaries}\label{sec:preliminaries}

In this section we introduce some necessary definitions and results on weight interlacing, which will be the main tool to prove Theorem \ref{thm:7}. For more details on weight interlacing, we refer the reader to \cite{A2019,F1999}.

The idea of weight interlacing is to assign weights originating from eigenvectors in order to ``regularize'' irregular graphs. This has been a powerful tool used generalize results from regular graphs to irregular graphs. Instances of it appear in \cite{F1999}, for extending the celebrated ratio bound on the independence number to irregular graphs, or in \cite{H1995}, where the Hoffman bound on the chromatic number was shown to hold for general graphs. In \cite{F1999}, several other eigenvalue bounds on parameters of regular graphs were also extended to general graphs using weight interlacing, and in \cite{AS2024} a general version of the Expander Mixing Lemma was derived using weight interlacing, illustrating its power.

In order to state the weight interlacing result, we need some definitions. The Perron-Frobenius Theorem (see \cite[Theorem 2.2.1]{spectra}) implies that a connected graph has a unique (up to scaling) eigenvector $\vecnu$ with only positive entries, the \emph{Perron eigenvector}, belonging to the largest eigenvalue of the graph. For $d$-regular graphs, we have $\vecnu=\j$, the all-ones vector. We interpret the entries of the Perron eigenvector as weight for the vertices.

Define $\vecrho : \mathscr P(V(G)) \to \R^{V(G)}$ to be the mapping sending $U\subseteq V(G)$ to
$$\vecrho U=\sum_{u\in U} \nu_u \mathbf{e}_u, $$
with $\mathbf{e}_u$ the unit vector on coordinate $u$. Now if $V(G)=\bigsqcup _{i=1}^t V_i$ is a partition of a graph $G$, write $S$ for the \emph{weight-characteristic matrix}:
\[
S_{u,i} \coloneqq \begin{cases}
    \nu_u &$if $u\in V_i,\\
    0 &$otherwise,$
\end{cases}
\]
for a vertex $u$ and a partition class $V_i$. Note that $S^TS$ is equal to the $t\times t$ diagonal matrix $D$ with entries $\|\vecrho V_i \|^2$. We normalize $S$ to
$$\overline S \coloneqq SD^{-1/2},$$
so that $\overline S^T \overline S$ is the identity matrix. We now recall the weight interlacing result by \cite{F1999}.

\begin{theorem}[Weight interlacing, {\cite[Lemma 2.3(a)]{F1999}}]\label{thm:weight interlacing}
The eigenvalues of
$$B\coloneqq \overline S^TA\overline S,$$
say $\mu_1\ge \dots \geq \mu_n$, \emph{interlace} the eigenvalues $\lambda_1\ge \dots \geq \lambda_n$ of $A$. That is, for every $1\le i \le t$ we have
\[
    \lambda_i \geq \mu_i \text{ and } \lambda_{n+1-i} \leq \mu_{t+1-i}.
\]
\end{theorem}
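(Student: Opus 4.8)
The plan is to recognize the statement as an instance of the classical Cauchy interlacing theorem: $B = \overline S^T A \overline S$ is precisely the matrix, in the coordinates furnished by the columns of $\overline S$, of the orthogonal compression of $A$ onto the $t$-dimensional subspace $W$ spanned by those columns. The only structural fact needed is the one already recorded above, namely $\overline S^T\overline S = I_t$, which says that $\overline S\colon \R^t \to \R^n$ is an isometric embedding with image $W$; this is exactly why the normalization by $D^{-1/2}$ was introduced.

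First I would check that the Rayleigh quotients of $B$ and $A$ agree along this embedding. For any nonzero $y \in \R^t$ we have $\|\overline S y\|^2 = y^T \overline S^T \overline S y = \|y\|^2 \neq 0$, hence
$$\frac{y^T B y}{y^T y} = \frac{(\overline S y)^T A (\overline S y)}{(\overline S y)^T (\overline S y)}.$$
Moreover, as $y$ ranges over an $i$-dimensional subspace $U \subseteq \R^t$, the vector $\overline S y$ ranges over $\overline S(U)$, which is again an $i$-dimensional subspace of $\R^n$ because $\overline S$ is injective.

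Next I would invoke the Courant--Fischer min-max theorem. Writing
$$\mu_i = \max_{\substack{U \subseteq \R^t \\ \dim U = i}}\ \min_{\substack{y \in U \\ y \neq 0}} \frac{y^T B y}{y^T y}, \qquad \lambda_i = \max_{\substack{V \subseteq \R^n \\ \dim V = i}}\ \min_{\substack{x \in V \\ x \neq 0}} \frac{x^T A x}{x^T x},$$
the displayed identity shows that for each $i$-dimensional $U$ the inner minimum over $U$ for $B$ equals the inner minimum over the $i$-dimensional subspace $\overline S(U)$ for $A$; taking the maximum over all such $U$ and comparing with the (no smaller) maximum over all $i$-dimensional subspaces of $\R^n$ gives $\mu_i \le \lambda_i$ for $1 \le i \le t$. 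For the companion bound $\lambda_{n+1-i} \le \mu_{t+1-i}$, I would rerun the identical argument with $(A,B)$ replaced by $(-A,-B)$ --- the hypothesis is preserved since $-B = \overline S^T(-A)\overline S$ --- which interchanges the roles of the largest and smallest eigenvalues; reindexing then yields the stated inequality. Equivalently, one may apply the dual max-min form of Courant--Fischer directly.

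Since this is a classical result, I do not expect a genuine obstacle; the points requiring care are (i) that $\overline S^T\overline S = I_t$ is what upgrades an inequality of Rayleigh quotients to an equality, so the normalization really is essential, and (ii) the bookkeeping in the two-sided statement, in particular that $B$ has only $t \le n$ eigenvalues, so the interlacing can only be asserted for indices $1 \le i \le t$.
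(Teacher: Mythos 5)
Your proof is correct. Note, however, that the paper does not prove this statement at all: it is quoted verbatim from Fiol (Lemma~2.3(a) of the cited reference), so there is no in-paper argument to compare against. Your Courant--Fischer/isometry argument is the standard proof of that cited result; the original Haemers--Fiol presentation instead picks, for each $i$, an explicit test vector in the span of the first $i$ eigenvectors of $B$ that is orthogonal to the images under $\overline S^T$ of the first $i-1$ eigenvectors of $A$, which is the same min--max idea in eigenvector form, so the two routes are essentially identical. One small point you implicitly correct: since $B$ is $t\times t$, the statement's ``$\mu_1\ge\dots\ge\mu_n$'' should read $\mu_1\ge\dots\ge\mu_t$, consistent with your remark that interlacing can only be asserted for $1\le i\le t$.
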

In particular, the least eigenvalue $\mu_t$ of $B$ is an upper bound for the least eigenvalue $\lambda_n$ of $A$.

It is often more convenient to work not with $B$ itself but with a matrix $M$ that is similar to $B$:
\[
M \coloneqq D^{-1/2} B D^{1/2}=D^{-1} S^TAS.
\]
Since $M$ and $B$ are similar matrices, they have the same eigenvalues so interlacing still applies to $M$ and $A$. Both $M$ and $B$ can be referred to as the \emph{weight-quotient matrix} of the graph with respect to the weights and the partition, but we will reserve this term for the matrix $M$ only.

The entries of $M$ are now given by
\[
M_{ij}=\frac 1{\|\vecrho V_i \|^2}\sum_{u\in V_i}\sum_{v\in V_j} \nu_u \nu_v A_{uv}.
\]
The value $M_{ij}$ can be seen as a weighted average of the weighted number of neighbors in $V_j$ of a vertex in $V_i$. The rows of $M$ sum to $\lambda_1$, which is often helpful for determining the entries of the matrix. This is not necessarily true for $B$.

\section{Proofs of Theorems \ref{thm:general} and \ref{thm:7}}\label{sec:new results}

\subsection{Proof of Theorem \ref{thm:general}: general upper and lower bound for \texorpdfstring{$\gamma_{k}$}{}}

We start with the lower bound of $c_1 k^{-3}$, which, as we mentioned in the introduction, is given by the odd cycle graphs $C_k$ on $k$ vertices (which have odd girth $k$). The spectrum of cycle graphs is well known, see for example \cite[Section 1.4.3]{spectra}. We have
\begin{align*}
\gamma_k &\ge \frac{\lambda_1(C_k)+\lambda_k(C_k)}k=\frac{2\left (1-\cos \left (\frac \pi k\right )\right)}{k},
\intertext{which using a degree 4 Taylor expansion of the cosine function around 0 gives}
\gamma_k &\ge \frac{\pi^2}{k^3}+O \left ( \frac 1 {k^5}\right ),
\end{align*}
giving the lower bound.

For the upper bound we extend the proof idea from \cite{C2022} for (\ref{eq:3-2sqrt2}) to general odd girth. In turn, the argument from \cite{C2022} extends the original argument for (\ref{eq:3-2sqrt2}) for the specific case of regular graphs from \cite{B1998}. Before we prove Theorem \ref{thm:general}, we briefly summarize the arguments from \cite{B1998} and \cite{C2022} that we will  extend in order to derive our bounds.

The argument from \cite{B1998} is as follows. The neighborhood of any vertex in a triangle-free graph is an independent set. In particular, the independence number of a regular triangle-free graph is at least $\lambda_1$. Together with the ratio bound (due to Hoffman, unpublished, cf. \cite{ratiobound}) we get
\begin{equation}\label{eq:ratio bound trianglefree}
    \lambda_1 \le \frac{-\lambda_n n}{\lambda_1 - \lambda_n}.
\end{equation}
Reordering gives an upper bound for $(\lambda_1+\lambda_n)/n$ purely in terms of $\lambda_1/n$, giving the upper bound (\ref{eq:3-2sqrt2}) after a straightforward optimization argument. This argument was extended to irregular graphs in \cite{C2022}, by showing that (\ref{eq:3-2sqrt2}) holds for irregular triangle-free graphs as well.

To prove the upper bound of Theorem \ref{thm:general} we need two preliminary results, namely Lemma \ref{lem:ratio bound odd girth} and Proposition \ref{prop:l1+ln bound odd girth through ratio}. Lemma \ref{lem:ratio bound odd girth} generalizes (\ref{eq:ratio bound trianglefree}) to general odd girth, where the existing result from \cite{C2022} is included as $\ell=1$.

\begin{lemma}\label{lem:ratio bound odd girth}
Let $\ell\ge 1$. If $G$ is a graph on $n$ vertices with odd girth at least $2\ell+3$, then
$$\lambda_{1}\le \frac{-\lambda_{n}^{2\ell-1}n}{\lambda_{1}^{2\ell-1}-\lambda_{n}^{2\ell-1}}.$$
\end{lemma}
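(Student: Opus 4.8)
The plan is to mimic Brandt's argument but replace the adjacency matrix $A$ with an odd power $A^{2\ell-1}$, whose combinatorial meaning encodes the walk structure that the odd-girth hypothesis controls. First I would record the key consequence of the odd-girth assumption: if $G$ has odd girth at least $2\ell+3$, then for any vertex $u$ there are no closed walks of length $2\ell-1$ through $u$, and more importantly no edges among the endpoints of walks of length $2\ell-1$ from $u$ that would create a short odd cycle — concretely, the set $N^{2\ell-1}(u)$ of vertices reachable from $u$ by a walk of odd length $2\ell-1$ behaves, relative to the closed-walk count, like an independent set does in the triangle-free case. The clean way to package this is: the diagonal entries of $A^{2\ell-1}$ are zero (no closed odd walks of length $< 2\ell+3$), so $\tr(A^{2\ell-1}) = 0$, i.e. $\sum_i \lambda_i^{2\ell-1} = 0$.

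Next I would set up the ratio-bound-style estimate on $A^{2\ell-1}$. The matrix $A^{2\ell-1}$ has eigenvalues $\lambda_i^{2\ell-1}$; since $2\ell-1$ is odd, the ordering of signs is preserved, so its largest eigenvalue is $\lambda_1^{2\ell-1}$ with the Perron eigenvector, and its smallest is $\lambda_n^{2\ell-1}$. The combinatorial input is that $A^{2\ell-1}$, suitably interpreted, contains a large ``independent-like'' principal structure: following \cite{C2022}, for an appropriately chosen vertex $v$ and the associated weight vector built from the Perron eigenvector, one obtains a rank-one or low-rank test vector $\x$ supported in a manner that kills the off-diagonal contributions, so that the Rayleigh quotient $\x\t A^{2\ell-1}\x / \x\t\x$ is forced to be small (in fact $\le 0$ in the regular case, and controlled in the irregular case by the Perron-weight averaging trick from the preliminaries). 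Combining this with the interlacing/ratio-bound inequality applied to $A^{2\ell-1}$ — namely that $\lambda_1^{2\ell-1}$ is bounded in terms of $\lambda_n^{2\ell-1}$, $n$, and the size/weight of the independent-like set — and using $\lambda_1^{2\ell-1}$ as the ``degree'' and the Perron weights to handle irregularity, yields an inequality of the shape
\[
\lambda_1 \le \frac{-\lambda_n^{2\ell-1}\, n}{\lambda_1^{2\ell-1} - \lambda_n^{2\ell-1}}.
\]
Here the $n$ in the numerator comes from the total Perron weight (normalized), and the denominator $\lambda_1^{2\ell-1}-\lambda_n^{2\ell-1}$ is the spread of $A^{2\ell-1}$, exactly as $\lambda_1-\lambda_n$ appears in \eqref{eq:ratio bound trianglefree}.

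The main obstacle I anticipate is the irregular case: in Brandt's regular argument one uses that the neighborhood of a vertex is genuinely independent and has size exactly $\lambda_1 = d$, but for $A^{2\ell-1}$ the relevant set $N^{2\ell-1}(u)$ is only ``independent'' with respect to the weighted count $(A^{2\ell-1})_{uv}$ rather than literally edge-free, and its weighted size is what plays the role of $\lambda_1$. Reproducing Csikvári's passage from regular to irregular — choosing the right vertex $v$ (one maximizing, or appropriately averaging, the Perron-weighted walk count), defining the test vector from $\vecnu$ restricted to $N^{2\ell-1}(v)$ versus its complement, and verifying that the odd-girth hypothesis makes the relevant cross terms vanish or have the right sign — is where the real care is needed; everything else reduces to the spectral bookkeeping $\sum\lambda_i^{2\ell-1}=0$ together with a ratio-bound inequality applied verbatim to $A^{2\ell-1}$ in place of $A$. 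Setting $\ell=1$ should visibly recover \eqref{eq:ratio bound trianglefree} and hence the \cite{C2022} result, which serves as a consistency check on the constants.
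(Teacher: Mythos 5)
Your plan has a genuine gap at its combinatorial core. The structural claim that $N^{2\ell-1}(u)$ ``behaves like an independent set'' is false for $\ell\ge 2$: an edge between two vertices $v,w$ reachable from $u$ by walks of length $2\ell-1$ only closes an odd walk of length $4\ell-1\ge 2\ell+3$, and a $(2\ell-1)$-walk between $v$ and $w$ only closes one of length $3(2\ell-1)\ge 2\ell+3$, so neither is excluded by the odd-girth hypothesis. The only fact you actually verify, $\tr(A^{2\ell-1})=0$, uses merely odd girth $\ge 2\ell+1$ and cannot by itself imply the lemma: the Higman--Sims graph has odd girth $5$, hence $\tr(A^{3})=0$, yet for $\ell=2$ the claimed bound would force $\lambda_1\le 8^3\cdot 100/(22^3+8^3)<5$, while $\lambda_1=22$. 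If you want to run a ratio-bound argument on $A^{2\ell-1}$, the correct ``independent'' set is the ordinary neighborhood $N(u)$: for $v,w\in N(u)$, a $vw$-walk of length $2\ell-1$ (or a closed $(2\ell-1)$-walk at $v$) together with the edges $uv$ and $uw$ yields a closed odd walk of length at most $2\ell+1$, hence an odd cycle of length at most $2\ell+1$, contradicting odd girth $\ge 2\ell+3$; this is exactly where the full strength of the hypothesis must enter, and it is absent from your sketch. Even with that fix you would still need a weighted Hoffman-type bound for the matrix $A^{2\ell-1}$ (which shares the Perron vector $\vecnu$ of $A$, with extreme eigenvalues $\lambda_1^{2\ell-1}$ and $\lambda_n^{2\ell-1}$) and a vertex $u$ with $\|\vecrho N(u)\|^2\ge \lambda_1\|\vecnu\|^2/n$ (essentially Lemma \ref{lem:odd girth 7}); these steps are only gestured at, and the role of ``$\lambda_1$'' is conflated with the weighted size of $N^{2\ell-1}(u)$, which is the wrong quantity.

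For comparison, the paper's proof avoids independent sets and the ratio bound altogether: odd girth $\ge 2\ell+3$ gives no closed walks of length $2\ell+1$ (note the exponent $2\ell+1$, not your $2\ell-1$), hence $0=\tr(A^{2\ell+1})=\sum_i\lambda_i^{2\ell+1}$; then $\lambda_i^{2\ell+1}\ge \lambda_n^{2\ell-1}\lambda_i^{2}$ for $i\ge 2$, the identity $\sum_{i\ge 2}\lambda_i^{2}=2|E(G)|-\lambda_1^{2}$, and $2|E(G)|\le \lambda_1 n$ (combined with $\lambda_n^{2\ell-1}\le 0$) give $0\ge \lambda_1^{2\ell+1}+\lambda_n^{2\ell-1}(\lambda_1 n-\lambda_1^{2})$, which rearranges to the stated inequality. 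The gap of two between the exponent in the trace identity and the exponents appearing in the bound is precisely the extra strength of the hypothesis that your plan never exploits.
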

\begin{proof}
    Since $G$ has no odd cycles of length at most $2\ell+1$, $G$ does not admit closed walks of length $2\ell+1$. Since $(A^{2\ell+1})_{uv}$ records the number of walks of length $2\ell+1$ between $u$ and $v$, then $A^{2\ell+1}$ only has zeroes on the diagonal and so
    \[0=\tr(A^{2\ell+1})=\sum_{i=1}^n \lambda_i^{2\ell+1}.\]
        By monotonicity of $x \mapsto x^{2\ell-1}$ and by the fact that the trace $A^2$ is equal to twice the number of edges \cite[Proposition 1.3.1]{spectra} we have
        \[0\ge \lambda_1^{2\ell+1}+\lambda_n^{2\ell-1}\sum_{i=2}^n \lambda _i^2=\lambda_1^{2\ell+1}+\lambda_n^{2\ell-1}(2|E(G)|-\lambda_1^2).\]
        Here we can use that $\lambda_1$ is bounded below by the average degree \cite[Proposition 3.1.2]{spectra}, so that
        \[0\ge \lambda_1^{2\ell+1} + \lambda_n^{2\ell-1}(\lambda_1n-\lambda_1^2).\]
    Reordering gives the bound.
\end{proof}

Proposition \ref{prop:l1+ln bound odd girth through ratio} generalizes (\ref{eq:3-2sqrt2}) to general odd girth, where the existing result from \cite{C2022} is included as $\ell=1$.

\begin{proposition}\label{prop:l1+ln bound odd girth through ratio}
    Let $\ell\ge 1$. If $G$ is a graph on $n$ vertices with odd girth at least $2\ell+3$, then
    $$\frac{\lambda_1+\lambda_n}n\le 1-\frac {2\ell}{2\ell-1}x_0,$$
    where $x_0$ is the unique root between $0$ and $1$ of the polynomial $g(x)=x^{2\ell}+2\ell x-2\ell+1$.
\end{proposition}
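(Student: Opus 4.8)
The plan is to combine Lemma \ref{lem:ratio bound odd girth} with a one‑variable optimization. First I would dispose of the degenerate cases. If $G$ has no edges or is bipartite then $\lambda_1+\lambda_n=0$, and this is at most the claimed bound because the bound is nonnegative: since $g'(x)=2\ell x^{2\ell-1}+2\ell>0$ on $(0,\infty)$, while $g(0)=-(2\ell-1)<0<2=g(1)$, the root $x_0$ is unique in $(0,1)$, and from $g\bigl(\tfrac{2\ell-1}{2\ell}\bigr)=\bigl(\tfrac{2\ell-1}{2\ell}\bigr)^{2\ell}>0$ together with monotonicity we get $x_0<\tfrac{2\ell-1}{2\ell}$, hence $1-\tfrac{2\ell}{2\ell-1}x_0>0$. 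So from now on assume $G$ has an edge; then $\lambda_1>0$, $\lambda_n<0$, and by Perron–Frobenius $s\coloneqq -\lambda_n/\lambda_1\in(0,1]$.

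Next I would rewrite Lemma \ref{lem:ratio bound odd girth} in terms of $s$. Since $2\ell-1$ is odd, $-\lambda_n^{2\ell-1}=|\lambda_n|^{2\ell-1}>0$, so clearing the (positive) denominator in the lemma gives $\lambda_1^{2\ell}-\lambda_1\lambda_n^{2\ell-1}\le -n\lambda_n^{2\ell-1}$, and dividing by $\lambda_1^{2\ell}>0$ yields $1+s^{2\ell-1}\le n\,s^{2\ell-1}/\lambda_1$, i.e. $\lambda_1/n\le s^{2\ell-1}/(1+s^{2\ell-1})$. Multiplying by $1+\lambda_n/\lambda_1=1-s\ge 0$ gives
$$\frac{\lambda_1+\lambda_n}{n}\le f(s),\qquad f(s)\coloneqq \frac{s^{2\ell-1}(1-s)}{1+s^{2\ell-1}}.$$
It therefore suffices to show $\max_{s\in[0,1]}f(s)=1-\tfrac{2\ell}{2\ell-1}x_0$.

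For the optimization, note $f(0)=f(1)=0$ and $f>0$ on $(0,1)$, so the maximum is attained at an interior critical point. Computing $f'/f=\tfrac{2\ell-1}{s}-\tfrac{1}{1-s}-\tfrac{(2\ell-1)s^{2\ell-2}}{1+s^{2\ell-1}}$, setting it to $0$, and clearing the positive factor $s(1-s)(1+s^{2\ell-1})$, the equation simplifies — after the $(2\ell-1)s^{2\ell-1}$ terms cancel — to exactly $s^{2\ell}+2\ell s-(2\ell-1)=g(s)=0$. Hence the unique interior critical point is $s=x_0$, so $\max f=f(x_0)$ (it is a maximum, not a minimum, since $f\ge0$ vanishes at both endpoints and is positive inside). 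Finally I would evaluate $f(x_0)$ using the defining relation: from $x_0^{2\ell}=(2\ell-1)-2\ell x_0$ we get $x_0^{2\ell-1}=\frac{(2\ell-1)-2\ell x_0}{x_0}$ and hence $1+x_0^{2\ell-1}=\frac{(2\ell-1)(1-x_0)}{x_0}$; substituting both into $f(x_0)$, the factor $(1-x_0)/x_0$ cancels and leaves $f(x_0)=\frac{(2\ell-1)-2\ell x_0}{2\ell-1}=1-\frac{2\ell}{2\ell-1}x_0$, as claimed.

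The only real obstacle is the calculus step: verifying that the critical‑point equation collapses to precisely $g(x)=0$, and that $f$ evaluated at its root telescopes to the stated closed form. Everything else — the reduction from Lemma \ref{lem:ratio bound odd girth}, the sign bookkeeping, and the handling of bipartite or edgeless $G$ — is routine.
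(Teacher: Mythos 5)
Your proof is correct and follows essentially the same route as the paper: both reduce the statement via Lemma \ref{lem:ratio bound odd girth} to maximizing $f(x)=\frac{x^{2\ell-1}(1-x)}{1+x^{2\ell-1}}$, identify the critical-point equation as $g(x)=0$, and evaluate $f(x_0)$ using the defining relation of $x_0$. The only cosmetic differences are your choice of variable ($s=-\lambda_n/\lambda_1$, giving the domain $[0,1]$ via Perron--Frobenius, instead of the paper's $x=\sqrt[2\ell-1]{\lambda_1/(n-\lambda_1)}$) and your explicit treatment of the edgeless/bipartite cases.
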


\begin{proof}
    Rewrite Lemma \ref{lem:ratio bound odd girth} to
    \[
        \lambda_n \le -\lambda_1 \sqrt[2\ell-1]{\frac {\lambda_1}{n-\lambda_1}}.\]
        We substitute $x$ for the root, so that
        \[\frac{\lambda_1+\lambda_n}n \le \frac{\lambda_1-\lambda_1 x}n=\frac{\lambda_1}n (1-x),\]
        and by rewriting $\lambda_1/n$ in terms of $x$ as well, we obtain
        \[\frac{\lambda_1+\lambda_n}n \le \frac{1-x}{\frac 1{x^{2\ell-1}}+1}=\frac{x^{2\ell-1}(1-x)}{1+x^{2\ell-1}}.\]
        Write
        \[f(x)=\frac{x^{2\ell-1}(1-x)}{x^{2\ell-1}+1}.\]
        Note that $f(0)=f(1)=0$, $f(x)>0$ on the interval $(0,1)$, and $f(x)<1$ for $x>1$. To find the optimal value for $f$, we take the derivative:
        \[f'(x) = \frac{-x^{2\ell-2} g(x)}{\left ( x^{2\ell-1}+1\right ) ^2},
        \text{ with }
        g(x)=x^{2\ell}+2\ell x-2\ell+1.\]
        So, on the interval $(0,1)$, we have that $f'(x_0)=0$ if and only if $g(x_0)=0$. On $(0,1)$, $g(x)$ is increasing, and $g(0)<0<g(1)$, so the input $x_0$ maximizing $f(x)$ is uniquely determined. Then, replacing $x_0^{2\ell}$ using that $x_0$ is a root of $g(x)$, we get
        \begin{align*}
            f(x_0)&=\frac{x_0^{2\ell}(1-x_0)}{x_0^{2\ell}+x_0}=\frac{(2\ell-1-2\ell x_0)(1-x_0)}{(2\ell-1)(1-x_0)}\\
        &=\frac{2\ell-1-2\ell x_0}{2\ell-1}=1-x_0-\frac{x_0}{2\ell-1}=1-\frac{2\ell}{2\ell-1}x_0,
    \end{align*}
    which concludes the proof.
\end{proof}

Proposition \ref{prop:l1+ln bound odd girth through ratio} gives an upper bound for $\gamma_k$ for every $k$. For example, for $k=5$ we have $g(x)=x^2+2x-1$, and so $x_0=\sqrt 2-1$, and we get (\ref{eq:3-2sqrt2}). To prove the upper bound of Theorem \ref{thm:general}, we study the asymptotics of the bound of Proposition \ref{prop:l1+ln bound odd girth through ratio} using an estimate. The estimate uses the \emph{Lambert W function} which is the inverse of $x\mapsto xe^x$, where we can and will restrict to the interval $[0,\infty)$. We can now be more specific than in Theorem \ref{thm:general}; we show that
$$\gamma_k < \frac{W(1/e)}{k-4},$$
where $W(1/e)\approx 0.2785$. This implies the upper bound of Theorem \ref{thm:general}.

\begin{proof}[Proof of the upper bound of Theorem \ref{thm:general}]
    We apply Proposition \ref{prop:l1+ln bound odd girth through ratio} with $k=2\ell+3$. We claim that $x_1\coloneqq (\ell-a)/\ell$ is a (strict) lower bound for $x_0$, with $a\coloneqq (W(1/e)+1)/2$. If $x_1$ is indeed a lower bound for $x_0$, then we have
    \begin{align*}
        1-\frac{2\ell}{2\ell-1}x_0&< 1-\frac{2\ell}{2\ell-1}x_1=1-\frac{2\ell-2a}{2\ell-1}=\frac{2a-1}{2\ell-1}=\frac{W(1/e)}{2\ell-1},
        \intertext{which is what we want to prove. In order to see that $x_1< x_0$, consider}
        g(x_1)&=\Big (\frac{\ell-a}\ell \Big )^{2\ell}+2\ell\frac {\ell-a}\ell-2\ell+1=\Big (1- \frac a\ell\Big )^{2\ell}-2a+1.
        \intertext{By $e^x> x+1$ for $x=-a/\ell$ we get}
        g(x_1) &< e^{-2a}-2a+1=e^{-1-W(1/e)}-W(1/e)\\
        &=\frac{1/e}{e^{W(1/e)}}-W(1/e)=\frac{1/e-W(1/e)e^{W(1/e)}}{e^{W(1/e)}}=0,
        \end{align*}
        since $W(x)e^{W(x)}=x$. We now have $g(x_1)< 0$ so indeed $x_1< x_0$, concluding the proof.
\end{proof}

\subsection{Proof of Theorem \ref{thm:7}: upper and lower bound for \texorpdfstring{$\gamma_7$}{}}

Again we start with the lower bound. The \emph{folded 7-cube} is obtained by identifying antipodes of the 7-dimensional hypercube graph. Alternatively, it can be defined as the Cayley graph of the underlying group of $\F_2^6 $ with respect to the generating set consisting of the six unit vectors and the all-ones vector $\j$. A cycle of length 7 is given by
$$0 \sim e_1 \sim e_1+e_2 \sim \dots \sim e_1+e_2+e_3+e_4+e_5+e_6 \sim 0,$$
but a 5-cycle or a 3-cycle are impossible. Using character theory (see \cite[Section 1.4.9]{spectra}) the eigenvalues are easily computed. In particular, the least eigenvalue is $-5$, and so $\gamma_7\ge 1/32 =0.03125$.

We now move to the upper bound from Theorem \ref{thm:7}. As a first step, note that it is enough to consider only connected graphs. Indeed, if $G$ is disconnected, then the spectrum of $G$ is the union of the spectra of its components; if $C$ is a component with $\lambda_1(C)=\lambda_1(G)$, then since $|V(C)|<n$ and $\lambda_{|C|}(C) \ge \lambda_n(G)$ \cite[Proposition 3.2.1(i)]{spectra} we have
$$\frac{\lambda_1(G)+\lambda_n(G)}{n} < \frac{\lambda_1(C)+\lambda_{|C|}(C)}{|V(C)|}.$$
Therefore, without loss of generality, we only consider connected graphs.

To prove the upper bound of Theorem \ref{thm:7} for connected graphs, we use weight interlacing as explained in Section \ref{sec:preliminaries}; let $\vecnu \in \R ^{V(G)}$ be the Perron eigenvector of the graph $G$, and let $\vecrho: \mathcal P(V(G)) \to \R^{V(G)}$ be as in Section \ref{sec:preliminaries}. We use two lemmas concerning the value $\|\vecrho U\|^2$ for different sets of vertices $U$, which could be seen as a ``weighted cardinality'' of the set; note that for regular graphs, we use the all-ones vector $\j$ for the weights, and then $\|\vecrho U \|^2$ is just the cardinality of $U$. This makes the proof of the upper bound of Theorem \ref{thm:7} drastically shorter for regular graphs, but for completeness we prove it in the most general case. In particular, the two lemmas we need are for the irregular case; both of them are trivial in the regular case.

For example, for regular graphs, the cardinality of the neighborhood of any vertex is equal to the largest eigenvalue $\lambda_1$. In terms of the positive constant eigenvector of norm 1 (with entries $1/\sqrt n$), this means that $\| \vecrho N(u) \|^2 = \lambda_1/n$ for every vertex $u$. This generalizes to irregular graphs in the following way.
\begin{lemma}\label{lem:odd girth 7}
    Let $G$ be a connected graph on $n$ vertices with Perron eigenvector $\vecnu $, scaled in such a way that $\|\vecnu \|=1$. Then there exists a vertex $u\in V(G)$ such that
    $$ \|\vecrho N(u)\|^2 \ge \frac {\lambda_1} n.$$
\end{lemma}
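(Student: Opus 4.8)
The plan is to produce the vertex $u$ by an averaging argument over all vertices rather than by an explicit construction. First I would unwind the definitions: for any vertex $u$ we have $\vecrho N(u)=\sum_{v\in N(u)}\nu_v\e_v$, hence $\|\vecrho N(u)\|^2=\sum_{v\in N(u)}\nu_v^2$, the sum of the squared Perron weights over the neighbours of $u$. Summing this over all $u\in V(G)$ and switching the order of summation, each vertex $v$ is counted exactly $\deg(v)$ times (once for each neighbour), so
\[
\sum_{u\in V(G)}\|\vecrho N(u)\|^2=\sum_{v\in V(G)}\deg(v)\,\nu_v^2=\vecnu^{\top}D\vecnu,
\]
where $D$ is the diagonal matrix of degrees.

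The crux is then the inequality $\vecnu^{\top}D\vecnu\ge\lambda_1$. For this I would invoke the positive semidefiniteness of the Laplacian $L=D-A$; concretely, $x^{\top}Lx=\sum_{uv\in E(G)}(x_u-x_v)^2\ge 0$ for every $x\in\R^{V(G)}$. Applying this to $x=\vecnu$ gives $\vecnu^{\top}D\vecnu\ge\vecnu^{\top}A\vecnu=\lambda_1\|\vecnu\|^2=\lambda_1$, using only that $\vecnu$ is a unit eigenvector for $\lambda_1$.

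Combining the two displays yields $\sum_{u\in V(G)}\|\vecrho N(u)\|^2\ge\lambda_1$; since the left-hand side is a sum of $n$ nonnegative terms, at least one of them is at least $\lambda_1/n$, and that term identifies the desired vertex $u$.

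I do not expect a serious obstacle here: the only idea needed is to average over all vertices and to recognise the resulting degree-weighted sum of squared Perron entries as the quadratic form $\vecnu^{\top}D\vecnu$, which dominates $\lambda_1$ by the Laplacian inequality. (Connectivity is used only to guarantee that the Perron eigenvector $\vecnu$ exists with all positive entries, as in the statement; positivity of $\vecnu$ plays no role in the averaging step itself.)
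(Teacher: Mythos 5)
Your proof is correct, but it is a genuinely different argument from the paper's. The paper first uses $\lambda_1 \ge 2|E(G)|/n$ (average degree bound) to find, by averaging $\deg(u)-\lambda_1 n\nu_u^2$ over vertices, some $u$ with $\deg(u)/\nu_u^2 \le \lambda_1 n$, and then applies Cauchy--Schwarz to the eigenvector equation $\lambda_1\nu_u=\sum_{v\in N(u)}\nu_v$ at that vertex to conclude $\lambda_1^2 \le \frac{\deg(u)}{\nu_u^2}\,\|\vecrho N(u)\|^2 \le \lambda_1 n\,\|\vecrho N(u)\|^2$. You instead average the target quantity itself: $\sum_{u}\|\vecrho N(u)\|^2=\sum_v \deg(v)\nu_v^2=\vecnu^{\top}D\vecnu$, and then use positive semidefiniteness of the Laplacian $D-A$ to get $\vecnu^{\top}D\vecnu\ge\vecnu^{\top}A\vecnu=\lambda_1$, finishing by pigeonhole. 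All steps check out (the double count is valid since $v\in N(u)$ iff $u\in N(v)$ in a simple graph). What your route buys: it is shorter, avoids both Cauchy--Schwarz and the average-degree fact, and as you note it never uses positivity of $\vecnu$ --- it works for any unit eigenvector of $\lambda_1$ (indeed even for any unit vector with Rayleigh quotient at least $\lambda_1$, by replacing equality with an inequality). What the paper's route buys: it isolates a vertex with small degree-to-weight ratio $\deg(u)/\nu_u^2\le\lambda_1 n$, which fits the weight-regularization viewpoint running through the paper (for regular graphs the chosen $u$ is arbitrary and the statement is immediate), but as a proof of this lemma it is no more constructive than yours, since both hinge on an averaging step.
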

\begin{proof}
    By \cite[Proposition 3.1.2]{spectra}, $\lambda_1$ is at least the average degree. So    
    \[
        2|E(G)| \le \lambda_1 n,\]
        and hence by $\| \vecnu \|^2=1$ we have
        \[\sum_{u\in V(G)} \deg(u) \le \lambda_1 n \sum_{u\in V(G)} \nu_u^2,\]
        which implies
        \[\sum_{u\in V(G)} (\lambda_1 n\nu_u^2-\deg (u))\ge 0,\]
        so that there must be a vertex $u$ with $\lambda_1 n \nu_u^2-\deg(u)\ge 0$. In other words,
        \[\frac{\deg(u)}{\nu_u^2}\le \lambda_1 n.\]
        Next, since $x$ is a the Perron eigenvector, we have
        \[\lambda_1 =\sum_{v\in N(u)} \frac{\nu_v }{\nu_u}.\]
        By the Cauchy-Schwarz inequality, we obtain
        \[\lambda_1^2 = \left (  \sum_{v\in N(u)} \frac{\nu_v }{\nu_u}\right )^2 \le \left ( \sum_{v\in N(u)} 1 \right )\left (  \sum_{v\in N(u)} \frac{\nu_v ^2}{\nu_u^2}\right )=\frac{\deg(u)}{\nu_u^2}\|\vecrho N(u)\|^2\]
    The result now follows after applying $\deg(u)/\nu_u^2 \le \lambda_1 n$.
\end{proof}
Secondly, for regular graphs a simple edge counting argument gives that the cardinality of an independent set $S$ is bounded above by the number of vertices that have a neighbor in $S$. This also generalizes to irregular graphs using Perron-eigenvector weights.

\begin{lemma}\label{lem:n/2}
Let $G$ be a connected graph. Let $\vecnu $ be its Perron eigenvector and let $S$ be an independent set in $G$. Write $T$ for the set of vertices that have a neighbor in $S$. Then $\|\vecrho S\|^2 \le \|\vecrho T\|^2$, with equality if and only if $\{S,T\}$ is a bipartition of $G$. In particular, $\|\vecrho S\|^2 \le \frac 12\|\vecnu \|^2$, with equality if and only if $S$ is a bipartite class of $G$.
\end{lemma}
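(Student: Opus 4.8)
The plan is to exploit the eigenvalue equation $\lambda_1 \nu_v = \sum_{w \sim v} \nu_w$ for the Perron eigenvector, summing it against appropriate weights over $S$ and over $T$. First I would compute $\lambda_1 \|\vecrho S\|^2 = \lambda_1 \sum_{u \in S} \nu_u^2 = \sum_{u \in S} \nu_u \sum_{w \sim u} \nu_w$. Since $S$ is independent, every neighbor $w$ of a vertex $u \in S$ lies in $T$; reversing the order of summation, this double sum equals $\sum_{w \in T} \nu_w \sum_{u \in S,\, u \sim w} \nu_u$. The inner sum is at most $\sum_{x \sim w} \nu_x = \lambda_1 \nu_w$, because $S$-neighbors of $w$ are among all neighbors of $w$ and all $\nu$-entries are positive. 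Hence $\lambda_1 \|\vecrho S\|^2 \le \lambda_1 \sum_{w \in T} \nu_w^2 = \lambda_1 \|\vecrho T\|^2$, and dividing by $\lambda_1 > 0$ gives $\|\vecrho S\|^2 \le \|\vecrho T\|^2$.

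Next I would track the equality case. Equality forces, for every $w \in T$, that $\sum_{u \in S,\, u \sim w} \nu_u = \sum_{x \sim w} \nu_x$, i.e. (using positivity of $\nu$) every neighbor of every $w \in T$ lies in $S$. Together with the fact that by definition every vertex of $T$ has a neighbor in $S$ and every neighbor of a vertex of $S$ lies in $T$, this says that $S$ and $T$ partition the vertex set of each connected component meeting $S$; since $G$ is connected, $\{S,T\}$ is a bipartition of $G$. Conversely, if $\{S,T\}$ is a bipartition then the same computation is an equality throughout. For the ``in particular'' statement, note $S$ and $T$ are disjoint (as $S$ is independent and $T$ consists of vertices with a neighbor in $S$, so no vertex of $S$ can lie in $T$), hence $\|\vecrho S\|^2 + \|\vecrho T\|^2 \le \|\vecnu\|^2$, and combining with $\|\vecrho S\|^2 \le \|\vecrho T\|^2$ yields $2\|\vecrho S\|^2 \le \|\vecnu\|^2$; equality requires both $\|\vecrho S\|^2 = \|\vecrho T\|^2$ and $S \sqcup T = V(G)$, i.e. $S$ is a bipartite class of $G$.

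I do not anticipate a serious obstacle here: the only delicate point is bookkeeping the equality conditions carefully, in particular making sure to invoke connectedness exactly where needed (to upgrade ``$\{S,T\}$ bipartitions each component touching $S$'' to ``$\{S,T\}$ bipartitions $G$''), and to note that $T$ could in principle be empty only if $S = \emptyset$, in which case all inequalities are trivial equalities $0 \le 0$ and the statements still hold vacuously (an empty independent set is not a bipartite class unless $G$ itself is empty, so one should phrase the equality clause to cover this, or simply assume $S \neq \emptyset$ and $G$ nontrivial, as is implicitly the case in the application).
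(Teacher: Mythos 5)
Your proof is correct and follows essentially the same argument as the paper: expand $\lambda_1\|\vecrho S\|^2$ via the eigenvalue equation, use independence of $S$ to push the double sum into $T$, and use positivity of $\vecnu$ to enlarge to all neighbors of vertices of $T$, with the same characterization of equality. Your explicit treatment of the ``in particular'' clause (disjointness of $S$ and $T$ plus the first inequality) and of the degenerate empty/trivial cases is a slightly more careful write-up of what the paper leaves implicit, but not a different method.
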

\begin{proof}
    Consider
    \begin{align*}
        \lambda_1 \|\vecrho S\|^2&=\sum_{v\in S} \nu_v  \cdot (\lambda_1 \nu_v ) =\sum_{v\in S} \nu_v  \sum_{w\in N(v)} \nu_w ,
        \intertext{where we use that $\vecnu $ is an eigenvector for the eigenvalue $\lambda_1$. Now note that every neighbor $w$ of $u$ is in $T$, so that}
        &= \sum_{v\in S} \sum_{w\in T} \nu_v  \nu_w  A_{vw}.
        \intertext{We now use that $\vecnu $ is a positive eigenvector and also include the neighbors $v$ of $w\in T$ that are not in $S$, to say}
        &\le \sum_{w\in T} \nu_w  \sum_{v\in N(w)} \nu_v =\sum_{w\in T} \nu_w  (\lambda_1 \nu_w )=\lambda_1 \|\vecrho T\|^2,
    \end{align*}
    from which the inequality follows. We have equality if and only if every neighbor of every vertex in $T$ is in $S$, which is equivalent to $\{S,T\}$ be a bipartition of the graph.
\end{proof}
We are now ready to prove the upper bound of Theorem \ref{thm:7}.
\begin{proof}[Proof of the upper bound of Theorem \ref{thm:7}]
    Since for bipartite graphs $\lambda_1+\lambda_n=0$, we assume without loss of generality that $G$ is not bipartite. Let $\vecnu$ be the Perron eigenvector of $G$, scaled in such a way that $\| \vecnu\|^2=1$. Let $u$ be any vertex of $G$ satisfying Lemma \ref{lem:odd girth 7}. Partition the vertex set $V(G)$ of $G$ into three non-empty parts as follows: Let $V_1$ be the set of vertices at distance $1$ from $u$, let $V_2$ be the set of vertices at distance $0$ or $2$ from $u$, and let $V_3$ the set of vertices at distance at least 3 from $u$. Let $M$ be the $3\times 3$ weight-quotient matrix of the adjacency matrix with respect to this partition
    \[
    M=\begin{bmatrix}
        0 & \lambda_1 & 0\\
        \lambda_1 \frac{\delta}{\alpha} & 0 & \lambda_1\left ( 1-\frac{\delta}\alpha\right )\\
        0 & \lambda_1 \frac{\alpha-\delta}{1-\delta -\alpha} & \lambda_1\left ( 1-\frac{\alpha-\delta}{1-\delta -\alpha}\right )
    \end{bmatrix},
    \]
    where $\delta=\|\vecrho V_1\|^2$ and $\alpha=\|\vecrho V_2 \|^2$. By Theorem \ref{thm:weight interlacing} (weight interlacing) we have
    \[
        \lambda_n(A) \le \lambda_3(M) =-\lambda_1 \left (\frac{\alpha^2-\alpha\delta + \sqrt{\alpha^4+6\alpha^3\delta + 9 \alpha^2 \delta^2-12\alpha^2 \delta-4\alpha\delta^2+4\alpha\delta}}{2\alpha(1-\delta - \alpha)} \right ).\]
        Now
        \[
        \lambda_1+\lambda_n \le \lambda_1 \left (1 - \frac{\alpha^2-\alpha\delta + \sqrt{\alpha^4+6\alpha^3\delta + 9 \alpha^2 \delta^2-12\alpha^2 \delta-4\alpha\delta^2+4\alpha\delta}}{2\alpha(1-\delta - \alpha)} \right ),
    \]
        and by our choice of $u$ satisfying Lemma \ref{lem:odd girth 7}  we have $\lambda_1/n \le \delta$ so that
    \begin{equation}\label{eq:mathematica}
    \frac{\lambda_1+\lambda_n}n \le \delta \left (1-\frac{\alpha^2-\alpha\delta + \sqrt{\alpha^4+6\alpha^3\delta + 9 \alpha^2 \delta^2-12\alpha^2 \delta-4\alpha\delta+4\alpha\delta}}{2\alpha(1-\delta - \alpha)} \right ).
    \end{equation}
    By Lemma \ref{lem:n/2} applied to $S=V_1$ we have $\delta <\alpha$, and by Lemma \ref{lem:n/2} applied to $S=V_2$ and the fact that $\|x\|^2=1$ we have $\alpha <  1/2$. 
    Using Mathematica \cite{Mathematica}, we obtained that on the domain $0<\delta < \alpha < 1/2$, the maximum value of (\ref{eq:mathematica}) is equal to the second largest root of $54x^3+423x^2-700x+27$, which is bounded above by $0.0396$.
\end{proof}

\section{Concluding Remarks}\label{sec:concludingremarks}

The general upper bound we obtained for $\gamma_k$ in Theorem \ref{thm:general} is of magnitude $k^{-1}$ and can likely be improved. However, it is unclear whether the correct order of magnitude for $\gamma_k$ is $k^{-3}$ or if it is strictly larger. While our weight-interlacing method to derive an upper bound on $\gamma_7$ yielded a strong result, generalizing it to larger weight-quotient matrices to obtain a result for $\gamma_k$ quickly becomes saturated (too many variables and too cumbersome even for a computer algebra system). Thus, some new ideas will be needed to extend this result any further. In light of the results obtained in this paper, we propose the following problems for further research.

\begin{problem}
    Determine the correct order of magnitude of $\gamma_k$.
\end{problem}

\begin{problem}
    Even though it is not known whether the Higman-Sims graph attains the largest value of $(\lambda_1+\lambda_n)/n$ among triangle-free regular graphs, let alone among all triangle-free graphs, in the smaller case of triangle-free strongly regular graphs the Higman-Sims graph was shown to be optimal in this regard \cite{C2022}. Does the analogue of this result in the odd girth 7 case hold with respect to the folded 7-cube? That is, does the folded 7-cube attain the largest $\lambda_1 + \lambda_n$ among all $\{ C_3, C_5\}$-free distance-regular graphs of diameter 3?
\end{problem}

\subsection*{Acknowledgments}
Aida Abiad is supported by the Dutch Research Council (NWO) through the grants VI.Vidi.213.085 and OCENW.KLEIN.475. The research of Thijs van Veluw is supported by the Special Research Fund of Ghent University through the grant BOF/24J/2023/047.


\end{document}